\documentclass[a4paper, english]{amsart}

\title{Traces, Schubert calculus, and Hochschild cohomology of category \texorpdfstring{$\catO$}{O}}
\author{Clemens Koppensteiner}
\address{Mathematical Institute, University of Oxford, Andrew Wiles Building, Radcliffe Observatory Quarter, Woodstock Road, OX2 6GG, Oxford UK}
\email{clemens.koppensteiner@maths.ox.ac.uk}
\thanks{The author was supported by the EPSRC grant EP/R045038/1.}

\usepackage[utf8]{inputenc}
\usepackage[T1]{fontenc}
\usepackage{lmodern}
\usepackage{microtype}

\usepackage{amsmath,amsfonts,amssymb}
\usepackage{mathtools,extpfeil,xfrac}
\usepackage{amsthm,thmtools}

\usepackage{tikz-cd}

\usepackage{enumitem}
\setlist[enumerate,1]{label=(\roman*)}
\setlist[enumerate,2]{label=(\alph*)}
\setlist[enumerate,3]{label=(\Alph*)}
\setlist[enumerate,4]{label=\arabic*.}

\usepackage[
    backend=biber,
    style=alphabetic,
    maxnames=100,
    maxalphanames=100,
]{biblatex}

\addbibresource{references.bib}

\DeclareLabelalphaTemplate{
    \labelelement{
        \field[final]{shorthand}
        \field{label}
        \field[varwidthlist,strside=left,names=4]{labelname}
    }
}

\DeclareFieldFormat{labelalpha}{#1}
\DeclareFieldFormat{extraalpha}{#1}

\usepackage{mparhack}  
\usepackage{todonotes}
\usepackage{csquotes}
\usepackage[overload]{textcase}
\usepackage{url}\urlstyle{rm}

\usepackage[colorlinks=false,unicode=true,bookmarksdepth=2,pdfusetitle]{hyperref}
\usepackage{bookmark}
\hypersetup{pdfauthor={Clemens Koppensteiner}}


\theoremstyle{plain}
\newtheorem{Theorem}{Theorem}[section]
\newtheorem*{Theorem*}{Theorem}

\newtheorem{Proposition}[Theorem]{Proposition}
\newtheorem{Corollary}[Theorem]{Corollary}
\newtheorem{Lemma}[Theorem]{Lemma}

\theoremstyle{definition}

\theoremstyle{remark}
\newtheorem{Remark}[Theorem]{Remark}

\newtheorem{Example}[Theorem]{Example}



\newcommand\sheaf\mathcal           
\newcommand\sO{{\sheaf{O}}}         
\newcommand\as[1]{\mathbb{A}^{#1}}  
\newcommand\ps[1]{\mathbb{P}^{#1}}  
\newcommand\pt{\mathrm{pt}}         
\newcommand\Ga{\mathbb{G}_a}        

\newcommand\cat\mathbf          
\newcommand\isoto{\xrightarrow{\sim}}   
\newcommand\Hom{\operatorname{Hom}}

\DeclareMathOperator\ev{ev}         
\DeclareMathOperator\coev{coev}     
\DeclareMathOperator\Tr{Tr}         
\DeclareMathOperator\PsId{Ps-Id}    
\DeclareMathOperator\id{id}         
\newcommand\FunctCont{\cat{Funct}_{\mathrm{cont}}}  
\newcommand\DGcatcont{\cat{DGCat}_{\mathrm{cont}}}  
\newcommand\dual\vee                
\newcommand\catVect{\cat{Vect}_k}   
\DeclareMathOperator\HH{HH}         

\newcommand\catO{\mathcal{O}}       
\newcommand\SL[1]{\mathrm{SL}_{#1}} 
\newcommand\symgroup[1]{{S_{#1}}}   

\newcommand\catDDMod[1]{\cat{DMod}(#1)}                         
\newcommand\catDbDModCoh[1]{\cat{DMod}_{\mathrm{coh}}(#1)}      

\newcommand\catDbDModCh{\cat{DMod}_{\{X_w\}}(X)}    
\newcommand\catDbStack{\catDbDModCoh{[X/U]}}        
\newcommand\catDbStackBig{\catDDMod{[X/U]}}         
\newcommand\catDbConstr{\cat{Sh}(X, \{X_w\})}       

\newcommand\HochSpace{{\mathfrak{H}}}               

\newcommand\proj{\mathrm{pr}}   
\newcommand\res[2]{\mathchoice{\left.#1\right|_{#2}}{#1|_{#2}}{#1|_{#2}}{#1|_{#2}}} 
\newcommand\rquot[2]{
    \mathchoice%
        {\left.#1\kern-0.2ex\middle/\kern-0.3ex\lower0.7ex\hbox{$\displaystyle #2$}\right.}%
        {\left.#1\middle/#2\right.}%
        {\left.#1\middle/#2\right.}%
        {\left.#1\middle/#2\right.}%
}
\newcommand\CC{{\mathbb{C}}}    
\newcommand\op{{\mathrm{op}}}   

\begin{document}

\begin{abstract}
    We discuss how the Hochschild cohomology of a dg category can be computed as the trace of its Serre functor.
    Applying this approach to the principal block of the Bernstein--Gelfand--Gelfand category $\catO$, we obtain its Hochschild cohomology as the compactly supported cohomology of an associated space.
    Equivalently, writing $\catO_0$ as modules over the endomorphism algebra $A$ of a minimal projective generator, this is the Hochschild cohomology of $A$.
    In particular our computation gives the Euler characteristic of the Hochschild cohomology of $\catO_0$ in type A.
\end{abstract}

\maketitle


\section{Introduction}\label{sec:intro}

Let $G$ be a semisimple complex algebraic group $G$ with corresponding Lie algebra $\mathfrak g$.
Of fundamental importance to the representation theory of $G$ is the category of perverse sheaves on the flag variety $X$ of $G$ smooth along the Schubert stratification.
Via Beilinson--Bernstein localization and the Riemann--Hilbert correspondence this category is equivalent to the principal block of $\catO_0$ of the Bernstein--Gelfand--Gelfand category $\catO$.

Given their central importance in representation theory, one would like to compute various fundamental invariants of these categories.
In this paper we present a geometric approach to computing the Hochschild cohomology of $\catO_0$ (or rather of the dg enhanced derived category of $\catO_0$).
As $\catO_0$ is equivalent to the category of finitely generated modules over the (finite dimensional) endomorphism algebra $A$ of a minimal projective generator of $\catO_0$, this is the same as the Hochschild cohomology of $A$.

By \cite[Proposition~1.5]{BeilinsonBezrukavnikovMirkovic:2004:TiltingExercises}, the bounded derived category of the category of perverse sheaves as above coincides with the bounded derived category $\catDbConstr$ of Schubert-constructible sheaves on the flag variety.
In Section~\ref{sec:traces} we describe how the Hochschild cohomology of a dg category can be computed as the trace of its Serre functor.
As the Serre functor of $\catDbConstr$ is described in \cite{BeilinsonBezrukavnikovMirkovic:2004:TiltingExercises}, this approach applies to our categories.
In Theorem~\ref{thm:HH_as_Hc}, we present the Hochschild cohomology as compactly supported cohomology of an auxiliary space $\HochSpace$ associated to $G$.

The space $\HochSpace$ can be stratified by intersections of Schubert cells.
Thus knowledge of the topology of such intersections would allow us to compute the Hochschild cohomology of $\catO_0$.
In particular, for $G = \SL n$ enough topological information is known to compute its Euler characteristic.

\begin{Theorem*}
    For $G = \SL n$, the Euler characteristic of the Hochschild cohomology of $\catDbConstr$ -- or equivalently of the algebra $A$ -- is
    \[
        \chi\bigl(\HH^\bullet(\catDbConstr)\bigr) = |W| = n!,
    \]
    where $W = \symgroup n$ is the Weyl group of $G$.
\end{Theorem*}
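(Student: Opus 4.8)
By Theorem~\ref{thm:HH_as_Hc} there is an isomorphism $\HH^\bullet(\catDbConstr)\cong H^\bullet_c(\HochSpace)$. Since $A$ is a finite-dimensional algebra of finite global dimension the left-hand side is finite dimensional, so $H^\bullet_c(\HochSpace)$ is bounded and its Euler characteristic is defined; as $\HochSpace$ is a complex algebraic variety this equals the topological Euler characteristic $\chi_c(\HochSpace)=\chi(\HochSpace)$, which is additive along any algebraic stratification. So the problem reduces to computing $\chi_c(\HochSpace)$ stratum by stratum.

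I would use the stratification of $\HochSpace$ by intersections of Schubert cells referred to in the introduction. Reading this off from the explicit description of the Serre functor of $\catDbConstr$ in \cite{BeilinsonBezrukavnikovMirkovic:2004:TiltingExercises} together with the trace construction of $\HochSpace$, one expects the strata to be, up to taking products, open Richardson varieties $X_w\cap X^v$ -- intersections of a Bruhat cell $X_w$ with an opposite Bruhat cell $X^v$ -- indexed by pairs $(v,w)$ in $W\times W$, so that $\chi_c(\HochSpace)$ is the sum of the numbers $\chi_c(X_w\cap X^v)$ over the relevant index set.

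It then remains to evaluate each term. Deodhar's decomposition writes $X_w\cap X^v$ as a disjoint union of pieces isomorphic to $(\mathbb{G}_m)^a\times\as b$, so $\chi_c(X_w\cap X^v)$ is the value at $q=1$ of the point-counting polynomial $\#(X_w\cap X^v)(\mathbb{F}_q)=R_{v,w}(q)$, the Kazhdan--Lusztig $R$-polynomial; hence $\chi_c(X_w\cap X^v)=R_{v,w}(1)=\delta_{v,w}$. Thus only the diagonal strata survive, each contributing $1$, and summing over $W$ yields $\chi(\HH^\bullet(\catDbConstr))=|W|=n!$.

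The main obstacle is the middle step: making the stratification of $\HochSpace$ completely explicit -- which intersections of Schubert cells occur and with what multiplicities, and whether the strata are honestly open Richardson varieties or rather configuration spaces of several flags in prescribed relative position (so that triple intersections of Schubert cells appear) -- and then evaluating their Euler characteristics. This is what forces the restriction to $G=\SL n$, where enough is known about such intersections; in other types the required topological input is not available in general. (The value of the invariant is in fact type-independent: $\chi(\HH^\bullet(A))=\operatorname{tr}(C_A (C_A^{\mathsf T})^{-1})$ for the Cartan matrix $C_A$ of $A$, and BGG reciprocity gives $C_A=D^{\mathsf T}D$ for the Verma decomposition matrix $D$, so $C_A$ is symmetric and $\chi=\operatorname{tr}(\mathrm{Id})=|W|$; but this shortcut bypasses the geometry that is the point of the present approach.)
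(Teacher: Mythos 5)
Your reduction to $\chi_c(\HochSpace)$ and the overall skeleton (stratify, show that only ``diagonal'' strata contribute, each contributing $1$) match the paper's proof of Theorem~\ref{thm:HH_Euler}, but the middle step -- which you yourself flag as the main obstacle -- is exactly where the content lies, and your proposed resolution does not apply. Concretely, the paper fibers $\HochSpace_w = \proj_1^{-1}(X_w)$ over the pair $(a,ua)\in X_w\times X_w$; the fiber is $C_{w_0}(a,ua)\times U_w$, where $C_{w_0}(x_1,x_2)$ is the space of flags in relative position $w_0$ to \emph{both} $x_1$ and $x_2$. This realizes your second guess (a configuration space of a third flag transverse to two given ones), not your first: it is the intersection of the big cells of two Borel subgroups in \emph{arbitrary} relative position $\sigma$, i.e.\ $Bw_0B/B\cap\sigma Bw_0B/B$, and only for $\sigma=w_0$ is this an open Richardson variety. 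For general $\sigma$ there is no off-the-shelf Deodhar decomposition into pieces $(\mathbb{G}_m)^a\times\as{b}$, and the evaluation $R_{v,w}(1)=\delta_{v,w}$ says nothing about these spaces. The statement you actually need, $\chi_c(C_{w_0}(x_1,x_2))=\delta_{x_1,x_2}$, is Lemma~\ref{lem:chi_of_intersection}, whose proof rests on the type-A recursion of Shapiro--Vainshtein for Euler characteristics of links of Schubert cells \cite{ShapiroVainshtein:1990:EulerCharacteristicsForLinksOfSchubertCells} (and even requires patching a missing case of their Corollary~5.4). That is precisely the topological input forcing $G=\SL n$; without it your argument is not a proof. (Dropping the shift $[4l(w_0)]$ from Theorem~\ref{thm:HH_as_Hc} is harmless, since it is even.)

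Your parenthetical algebraic remark, by contrast, is essentially a complete alternative proof and deserves to be more than an aside: Happel's trace formula $\chi(\HH^\bullet(A))=\operatorname{tr}\bigl(C_A(C_A^{\mathsf T})^{-1}\bigr)$ for a finite-dimensional algebra of finite global dimension, combined with BGG reciprocity $C_A=D^{\mathsf T}D$, gives $\chi=\operatorname{tr}(\mathrm{Id})=|W|$ in every type (and $|W/W_P|$ in the parabolic case), which is the conjecture stated in the introduction. This is a genuinely different route from the paper's: it trades the geometry of $\HochSpace$ for a known homological identity, buying type-independence at the cost of saying nothing about the individual cohomology groups, which is what the geometric approach is designed to access.
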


\begin{Remark}\label{rem:cohomology_in_HH}
    One notes that the cohomology of the flag variety, having a basis given by classes of Schubert varieties, also has Euler characteristic $|W|$.
    Indeed there are two natural monomorphisms from the cohomology of $X$ to $\HH^\bullet(\catDbConstr)$:
    one the one hand one has 
    \[
        \HH^0(\catDbConstr) \cong \bigoplus_n H^n(X,\, \CC),
    \]
    see for example \cite[Theorem~1]{Stroppel:2009:ParabolicCategoryO}, on the other hand in Corollary~\ref{cor:map_from_cohomology} we show the existence of a monomorphism 
    \[
        H^\bullet(X)^\op \hookrightarrow \HH^\bullet(\catDbConstr).
    \]
\end{Remark}

More generally, we conjecture that in any type and for any parabolic subgroup $P$ of $G$ with corresponding Lie algebra $\mathfrak p$, the Euler characteristic of the Hochschild cohomology of the principal block $\catO_0^{\mathfrak p}$ of the parabolic BBG category $\catO^{\mathfrak p}$ is $|W/W_P|$, where $W_P \subseteq W$ is the subgroup associated to $P$.
We intend to return to this question, as well as the study of singular blocks of $\catO$, in future work.
Let us just note here that in both of these situations at least an algebraic description of the Serre functor of the corresponding derived category is know \cite{Stroppel:2009:ParabolicCategoryO,MazorchukStroppel:2008:ProjectiveInjectiveModulesSerreFunctorsEtc}.

\section{Dualizable categories, Serre functors, and Hochschild cohomology}\label{sec:traces}

In this section $k$ is any field of characteristic $0$, which in the following sections will be specialized to $k = \CC$.
We fix a model for the homotopy category of dg categories over $k$ with continuous (i.e., colimit preserving) functors and tensor product of dg categories as monoidal structure.
The reader may user their preferred realization, but for concreteness following \cite[Section~1.10]{GaitsgoryRozenblyum:2017:StudyInDAG:1} we let $\DGcatcont$ be the monoidal $(\infty,2)$-category of presentable stable $(\infty,1)$-categories enhanced in the derived $(\infty,1)$-category $\catVect$ of vector spaces, with continuous functors.
We write $\otimes = \otimes_{\catVect}$ for the symmetric monoidal product of dg categories with unit $\catVect$.

Any dg category $\cat C\in\DGcatcont$ is in particular cocomplete, i.e., it admits (small) filtered colimits.
An object $c$ of $\cat C$ is called \emph{compact} if the functor $\Hom_{\cat C}(c, -)$ preserves such filtered colimits.
We write $\cat C^c$ for the full subcategory of compact objects.
The category $\cat C$ is \emph{compactly generated} if it does not properly contain a full cocomplete stable subcategory that contains all compact objects.
This is equivalent to $\cat C^c$ be a set of weak generators for $\cat C$ \cite[Proposition~1.5.4.5]{GaitsgoryRozenblyum:2017:StudyInDAG:1}.
If $\cat C$ and $\cat D$ are compactly generated, then $\cat C \otimes \cat D$ is compactly generated by objects of the form $c \boxtimes d$ with $c \in \cat C^c$ and $d \in \cat D^c$ \cite[Proposition~1.10.5.7]{GaitsgoryRozenblyum:2017:StudyInDAG:1}.
Moreover, if $c, c' \in \cat C^c$ and $d, d' \in \cat D^c$, then
\[
    \Hom_{\cat C \otimes \cat D}(c \boxtimes d,\, c' \boxtimes d') = \Hom_{\cat C}(c,\, c') \otimes \Hom_{\cat D}(d,\, d').
\]
A continuous functor $F\colon \cat C \to \cat D$ always has a right adjoint $F^R$.
This right adjoint is itself continuous if and only if $F$ preserves compactness \cite[Lemma~1.7.1.5]{GaitsgoryRozenblyum:2017:StudyInDAG:1}.

A dg category $\cat C$ is called \emph{dualizable}, if it is so as an object of the monoidal category $\DGcatcont$.
Thus there exists a dual category $\cat C^\dual$ and continuous functors
\[     
    \coev_{\cat C} \colon \catVect \to \cat C \otimes \cat C^\dual, \qquad
    \ev_{\cat C} \colon \cat C^\dual \otimes \cat C \to \catVect,
\]
such that the composition
\[
    \cat C
    \xrightarrow{\coev_{\cat C} \otimes \id_{\cat C}}
    \cat C \otimes \cat C^\dual \otimes \cat C
    \xrightarrow{\id_{\cat C} \otimes \ev_{\cat C}}
    \cat C
\]
is isomorphic to $\id_{\cat C}$ and
\[
    \cat C^\dual
    \xrightarrow{\id_{\cat C^\dual} \otimes \coev_{\cat C}}
    \cat C^\dual \otimes \cat C \otimes \cat C^\dual
    \xrightarrow{\ev_{\cat C} \otimes \id_{\cat C^\dual}}
    \cat C^\dual
\]
is isomorphic to $\id_{\cat C^\dual}$.

If $\cat C$ is compactly generated, then $\cat C$ is dualizable and one can identify $(\cat C^\dual)^c$ with $(\cat C^c)^{\mathrm{op}}$ \cite[Proposition~1.7.3.2]{GaitsgoryRozenblyum:2017:StudyInDAG:1}.
Denote this identification by $c \mapsto c^\dual$ for $c \in \cat C^c$.
One has a canonical isomorphism
\[
    \ev_{\cat C}(a^\dual \boxtimes b) \cong \Hom_{\cat C}(a, b)
\]
for $a,\, b \in \cat C^c$.
We note that $(\cat C^\dual)^\dual \cong \cat C$.

Let $\sigma_{\cat C}$ denote both the identification $\cat C \otimes \cat C^\dual \cong \cat C^\dual \otimes \cat C$ and its inverse, given by interchanging the factors.
We will write $\coev_{\cat C}^\sigma$ for the composition $\sigma \circ \coev_{\cat C}$ and $\ev_{\cat C}^\sigma$ for $\ev_{\cat C} \circ \sigma$.
Further we set $u_{\cat C} = \coev_{\cat C}(k)$.

For any dualizable dg category $\cat C$, the category $\cat C \otimes \cat C^\dual$ also dualizable, with dual $(\cat C \otimes \cat C^\dual)^\dual \cong \cat C^\dual \otimes \cat C$ and evaluation map
\[
    \ev_{\cat C \otimes \cat C^\dual} = (\ev_{\cat C} \otimes \ev_{\cat C^\dual}) \circ \sigma_{2,3} \colon \cat C^\dual \otimes \cat C \otimes \cat C \otimes \cat C^\dual \to \catVect,
\]
where $\sigma_{2,3}$ exchanges the second and third factor.

For dualizable dg categories $\cat C$ and $\cat D$, we have an identification $\FunctCont(\cat C,\, \cat D) \cong \cat C^\dual \otimes \cat D$.
Explicitly, to a continuous functor $F\colon \cat C \to \cat D$ we associate the kernel
\[
    K_F = (\id_{\cat C^\dual} \otimes F )( \sigma_{\cat C}(u_{\cat C})) \in \cat C^\dual \otimes \cat D,
\]
and to a kernel $K \in \cat C^\dual \otimes \cat D$ we associate the composition
\[
    \cat C
    \xrightarrow{\id_{\cat C} \otimes K}
    \cat C \otimes \cat C^\dual \otimes \cat D
    \xrightarrow{\ev_{\cat C}^\sigma \otimes \id_{\cat D}}
    \cat D.
\]
In particular, the kernel for the identity endofunctor is $\sigma_{\cat C}(u_{\cat C}) \in \cat C^\dual \otimes \cat C$.
If $\cat C$ and $\cat D$ are dualizable dg categories, the identification
\[
    \FunctCont(\cat C,\, \cat D) \cong \cat C^\dual \otimes \cat D \cong \FunctCont(\cat D^\dual,\, \cat C^\dual)
\]
assigns to each functor $F\colon \cat C \to \cat D$ its dual $F^\dual\colon \cat D^\dual \to \cat C^\dual$.

A dualizable dg category $\cat C$ is called \emph{2-dualizable} if $\coev_{\cat C}$ has both a left and a right adjoint in $\FunctCont(\cat C^\dual \otimes \cat C,\, \catVect)$.
Equivalently one can ask for $\ev_{\cat C}$ to have continuous adjoints.
It follows that $\ev_{\cat C}$ preserves compactness and hence that $\cat C^c$ is proper, i.e.~has bounded Hom-complexes with finite dimensional cohomology spaces.

\begin{Proposition}\label{prop:Serre}
    Let $\cat C$ be a compactly generated dualizable dg category.
    If $\cat C$ is 2-dualizable then there exist an autoequivalence $S$ of $\cat C$ and quasi-isomorphisms
    \[
        \eta_{a,b} \colon \Hom_{\cat C}(a,\, b) \cong \Hom_{\cat C}(b,\, Sa)^*.
    \]
    functorial in $a,\, b \in \cat C^c$.
\end{Proposition}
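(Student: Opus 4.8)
\emph{Proof plan.} The plan is to realize $S$ as the endofunctor of $\cat C$ whose kernel is the \emph{pseudo-identity}. Recall from the identification $\FunctCont(\cat C,\cat C)\cong\cat C^\dual\otimes\cat C$ that a kernel $K\in\cat C^\dual\otimes\cat C$ corresponds to the functor
\[
    F_K\colon \cat C \xrightarrow{\id_{\cat C}\otimes K} \cat C\otimes\cat C^\dual\otimes\cat C \xrightarrow{\ev_{\cat C}^\sigma\otimes\id_{\cat C}} \cat C,
\]
and that the kernel of $\id_{\cat C}$ is $\sigma_{\cat C}(u_{\cat C})$. Since $\cat C$ is $2$-dualizable, $\ev_{\cat C}$ has a continuous right adjoint $\ev_{\cat C}^R\colon\catVect\to\cat C^\dual\otimes\cat C$; I would set $\PsId_{\cat C}\coloneqq\ev_{\cat C}^R(k)$ and $S\coloneqq F_{\PsId_{\cat C}}$.

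The technical core is the following bookkeeping isomorphism: for every kernel $K\in\cat C^\dual\otimes\cat C$ and all $a,b\in\cat C^c$,
\[
    \Hom_{\cat C}\bigl(b,\,F_K(a)\bigr)\;\cong\;\Hom_{\cat C^\dual\otimes\cat C}\bigl(a^\dual\boxtimes b,\,K\bigr),
\]
naturally in $a$, $b$, and $K$. To prove it I would first take $K=c_0^\dual\boxtimes c_1$ with $c_0,c_1\in\cat C^c$: here $F_K(a)\cong\Hom_{\cat C}(c_0,a)\otimes c_1$ (using $\ev_{\cat C}^\sigma(a\boxtimes c_0^\dual)=\ev_{\cat C}(c_0^\dual\boxtimes a)\cong\Hom_{\cat C}(c_0,a)$), so that both sides are identified with $\Hom_{\cat C}(c_0,a)\otimes\Hom_{\cat C}(b,c_1)$ by the $\Hom$-formula for box products of compact objects together with $\Hom_{\cat C^\dual}(a^\dual,c_0^\dual)\cong\Hom_{\cat C}(c_0,a)$. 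Both sides are exact in $K$, so the isomorphism propagates to all compact $K$; and since $a^\dual\boxtimes b$ is compact in $\cat C^\dual\otimes\cat C$, while $K\mapsto F_K(a)$ and $\Hom_{\cat C}(b,-)$ preserve filtered colimits, writing an arbitrary $K$ as a filtered colimit of compact kernels extends it to all $K$.

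Applying this with $K=\PsId_{\cat C}=\ev_{\cat C}^R(k)$ and then using the adjunction $\ev_{\cat C}\dashv\ev_{\cat C}^R$ together with the canonical identification $\ev_{\cat C}(a^\dual\boxtimes b)\cong\Hom_{\cat C}(a,b)$ yields
\begin{align*}
    \Hom_{\cat C}(b,\,Sa)
    &\cong\Hom_{\cat C^\dual\otimes\cat C}\bigl(a^\dual\boxtimes b,\,\ev_{\cat C}^R(k)\bigr)\\
    &\cong\Hom_{\catVect}\bigl(\ev_{\cat C}(a^\dual\boxtimes b),\,k\bigr)
    \;\cong\;\Hom_{\cat C}(a,b)^{*},
\end{align*}
naturally in $a,b\in\cat C^c$. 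Because $\cat C$ is $2$-dualizable, $\cat C^c$ is proper, so $\Hom_{\cat C}(a,b)$ is a perfect complex and taking $k$-linear duals gives the desired natural quasi-isomorphisms $\eta_{a,b}\colon\Hom_{\cat C}(a,b)\isoto\Hom_{\cat C}(b,Sa)^{*}$.

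It remains to see that $S$ is an equivalence, and this is the one point that requires the full strength of $2$-dualizability rather than merely properness. The plan is to run the same construction with the continuous left adjoint $\ev_{\cat C}^L$, producing a kernel $\PsId_{\cat C}'\coloneqq\ev_{\cat C}^L(k)$ and the functor $S'\coloneqq F_{\PsId_{\cat C}'}$, and then to identify the convolutions of kernels: a computation with the triangle identities for the adjunctions $\ev_{\cat C}^L\dashv\ev_{\cat C}\dashv\ev_{\cat C}^R$ and the duality identities relating $\coev_{\cat C}$ and $\ev_{\cat C}$ should show that $\PsId_{\cat C}'$ convolved with $\PsId_{\cat C}$ (in either order) is the identity kernel $\sigma_{\cat C}(u_{\cat C})$, whence $S'\circ S\cong\id_{\cat C}\cong S\circ S'$. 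I expect this last identification of the convolved pseudo-identities to be the main obstacle; everything preceding it is essentially formal. Alternatively one can finish by checking that $S$ preserves $\cat C^c$ (because $\PsId_{\cat C}$ is then a compact object of $\cat C^\dual\otimes\cat C$) and invoking the classical fact of Bondal and Kapranov that a Serre functor on a proper, idempotent-complete dg category is automatically an autoequivalence, from which the same follows for its colimit-preserving extension to $\cat C$.
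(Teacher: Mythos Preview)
Your derivation of the Serre duality isomorphisms is correct and takes a genuinely different route from the paper. The paper works on the $\coev$ side: it observes that $u_{\cat C}=\coev_{\cat C}(k)$ is compact, defines $\PsId_{\cat C}$ as the endofunctor with kernel $u_{\cat C}^\dual$, quotes \cite[Proposition~5.4.6]{Gaitsgory:kernels} for the fact that $\PsId_{\cat C}$ is an equivalence, sets $S=\PsId_{\cat C}^{-1}$, and then reaches the duality formula through a chain passing through $\coev_{\cat C}^L$ and $\coev_{\cat C}^R$. You instead work on the $\ev$ side, taking the kernel $\ev_{\cat C}^R(k)$ and reading off $\Hom_{\cat C}(b,Sa)\cong\Hom_{\cat C}(a,b)^*$ directly from the $(\ev_{\cat C}\dashv\ev_{\cat C}^R)$ adjunction together with your bookkeeping lemma. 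Your route is more transparent for the duality formula; the paper's route front-loads the equivalence statement by citing Gaitsgory.

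The one soft spot is exactly where you flag it: the equivalence of $S$. Your approach~(a) is the right idea and amounts to reproving the cited result of Gaitsgory; the triangle-identity computation is doable but not entirely formal. In approach~(b), the assertion that $\ev_{\cat C}^R(k)$ is compact is true but not for free: it follows from the kernel calculus of \cite{Gaitsgory:kernels} (identifying the kernel of $\ev_{\cat C}$ with $u_{\cat C}$ via $\ev_{\cat C}^\dual\cong\coev_{\cat C}$, whence the kernel of $\ev_{\cat C}^R$ is $u_{\cat C}^\dual$), so you should say so. Note also that the Bondal--Kapranov argument you invoke really uses \emph{both} representabilities: to conclude $S$ is an equivalence on $\cat C^c$ one also needs the ``inverse'' Serre functor corepresenting $b\mapsto\Hom_{\cat C}(b,a)^*$, which is precisely what your $\ev_{\cat C}^L(k)$ supplies. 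So (a) and (b) are not really alternatives; either way you must produce the companion functor from $\ev_{\cat C}^L$.
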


One calls $S$ the \emph{Serre functor} of $\cat C^c$ \cite{BondalKapranov:1989:RepresentableFunctorsSerreFunctors}.
Up to isomorphism, it is the only autoequivalence of $\cat C^c$ admitting isomorphisms $\eta$ as above \cite[Proposition~1.5]{BondalOrlov:2001:ReconstructionOfAVariety}.

\begin{proof}
    Let us first assume that $\cat C$ is 2-dualizable.
    Then the kernel $u_{\cat C} = \coev_{\cat C}(k)$ in $\cat C^\dual \otimes \cat C$ is compact.
    Thus the endofunctor $\PsId_{\cat C}$ of $\cat C$ given by $u_{\cat C}^\dual$ is an equivalence by \cite[Proposition 5.4.6]{Gaitsgory:kernels}.

    By \cite[Theorem 5.2.3]{Gaitsgory:kernels}, the right adjoint $\coev_{\cat C}^R$ of $\coev_{\cat C}$ has kernel $u_{\cat C}^\dual \in \cat C^\dual \otimes \cat C \cong (\cat C \otimes \cat C^\dual)^\dual \otimes \catVect$.
    Here we use that $\PsId_{\catVect} = \id_{\catVect}$.
    Setting $S = \PsId_{\cat C}^{-1}$ we therefore have
    \[
        \coev_{\cat C}^R \cong \ev_{\cat C}^\sigma \circ (S^{-1} \otimes \id_{\cat C^\dual}).
    \]
    Under the identification $(\cat C \otimes \cat C^\dual)^\dual \cong \cat C^\dual \otimes \cat C$, the functor $\coev_{\cat C}^\dual$ is again given by the kernel $u_{\cat C}^\dual$.
    Therefore we have $\coev_{\cat C}^\dual \cong \coev_{\cat C^\dual}^R$.
    Thus for compact objects $a$ and $b$, \cite[Lemma~1.5.6]{Gaitsgory:kernels} implies that
    \begin{align*}
        \coev_{\cat C}^L(b \boxtimes (Sa)^\dual) & \cong
        \coev_{\cat C^\dual}^R(b^\dual \boxtimes Sa) \\&\cong
        \ev_{\cat C^\dual}^\sigma\circ (\id_{\cat C^\dual} \otimes S^{-1}) (b^\dual \boxtimes Sa) \\&\cong
        \ev_{\cat C^\dual}^\sigma(b^\dual \boxtimes a) \\&\cong
        \Hom_{\cat C}(a,\, b).
    \end{align*}
    By adjunction we therefore have
    \begin{align*}
        \Hom_{\cat C}(a,\, b)^* &\cong
        \Hom_k(\coev_{\cat C}^L(b \boxtimes (Sa)^\dual),\, k) \\& \cong
        \Hom_{\cat C \otimes \cat C^\dual}(b \boxtimes (Sa)^\dual,\, \coev_{\cat C}(k)) \\&\cong
        \ev_{\cat C \otimes \cat C^\dual}(b^\dual \boxtimes Sa \boxtimes \coev_{\cat C}(k)).
    \end{align*}
    Applying the definition of dualizability, this simplifies to
    \[
        \ev_{\cat C}(b^\dual \boxtimes Sa) \cong
        \Hom_{\cat C}(b,\, Sa)
    \]
    as required.
%
\end{proof}

For any dg category $\cat C$, one defines its \emph{Hochschild cohomology} as the endomorphisms of the identity functor:
\[
    \HH^\bullet(\cat C) = \Hom_{\FunctCont}(\id_{\cat C},\, \id_{\cat C}).
\]
One notes that if $\cat C$ is the category of modules over some algebra $A$, this recovers the Hochschild cohomology of $A$.
In particular, $\HH^0(\cat C)$ is the center of $A$.

To compute the Hochschild cohomology, we will connect it to the trace of the Serre functor.
Here, if $F\colon \cat C \to \cat C$ is an endofunctor with kernel $K_F \in \cat C^\dual \otimes \cat C$, one defines the \emph{trace} $\Tr(F)$ as $\ev_{\cat C}(K_F)$, i.e.~as the image of $k$ under the composition
\[
    \catVect
    \xrightarrow{\coev_{\cat C}^\sigma}
    \cat C^\dual \otimes \cat C
    \xrightarrow{\id_{\cat C^\dual} \otimes F}
    \cat C^\dual \otimes \cat C
    \xrightarrow{\ev_{\cat C}}
    \catVect
\]

\begin{Proposition}\label{prop:HH_as_trace}
    For any 2-dualizable dg category $\cat C$ with Serre functor $S$ there exists a canonical quasi-isomorphism
    \[
        \HH^\bullet(\cat C) \cong \Tr(S^{-1}).
    \]
\end{Proposition}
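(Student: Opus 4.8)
The strategy is to recognize both $\HH^\bullet(\cat C)$ and $\Tr(S^{-1})$ as the image of $k$ under one and the same functor, namely $\ev_{\cat C} \circ (\id_{\cat C^\dual} \otimes S^{-1}) \circ \coev_{\cat C}^\sigma \colon \catVect \to \catVect$. For $\Tr(S^{-1})$ this is literally the definition given above, so the content of the proof lies in the corresponding description of the left-hand side.

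To obtain it I would start from $\HH^\bullet(\cat C) = \Hom_{\FunctCont(\cat C, \cat C)}(\id_{\cat C},\, \id_{\cat C})$ and transport along the equivalence $\FunctCont(\cat C, \cat C) \cong \cat C^\dual \otimes \cat C$, under which $\id_{\cat C}$ corresponds to its kernel $\sigma_{\cat C}(u_{\cat C}) = \coev_{\cat C}^\sigma(k)$. Since any continuous functor $F \colon \catVect \to \cat D$ satisfies $F = - \otimes F(k)$ and hence $\Hom_{\cat D}(F(k),\, F(k)) \cong F^R(F(k))$, applying this to $F = \coev_{\cat C}^\sigma$ gives
\[
    \HH^\bullet(\cat C) \cong (\coev_{\cat C}^\sigma)^R\bigl(\coev_{\cat C}^\sigma(k)\bigr).
\]
Writing $\coev_{\cat C}^\sigma = \sigma_{\cat C} \circ \coev_{\cat C}$ with $\sigma_{\cat C}$ an equivalence we have $(\coev_{\cat C}^\sigma)^R \cong \coev_{\cat C}^R \circ \sigma_{\cat C}^{-1}$; the two symmetries cancel, leaving $\HH^\bullet(\cat C) \cong \coev_{\cat C}^R(u_{\cat C})$.

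Next I would substitute the identity $\coev_{\cat C}^R \cong \ev_{\cat C}^\sigma \circ (S^{-1} \otimes \id_{\cat C^\dual})$ established in the proof of Proposition~\ref{prop:Serre} (valid since $\cat C$ is 2-dualizable). Evaluating at $u_{\cat C} = \coev_{\cat C}(k)$ and using naturality of the symmetry (in the form $\sigma_{\cat C} \circ (S^{-1} \otimes \id_{\cat C^\dual}) \cong (\id_{\cat C^\dual} \otimes S^{-1}) \circ \sigma_{\cat C}$) turns this into
\[
    \HH^\bullet(\cat C) \cong \ev_{\cat C}\bigl((\id_{\cat C^\dual} \otimes S^{-1})(\coev_{\cat C}^\sigma(k))\bigr) = \Tr(S^{-1}).
\]
Because every identification used is canonical, so is the resulting quasi-isomorphism.

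The step that needs the most care is the first one: correctly reading off the kernel of the identity endofunctor under $\FunctCont(\cat C, \cat C) \cong \cat C^\dual \otimes \cat C$, and then keeping track of the several occurrences of the symmetry $\sigma_{\cat C}$ — in that kernel, inside $\coev_{\cat C}^\sigma$ and $\ev_{\cat C}^\sigma$, and the one produced by naturality — so that they indeed cancel in pairs. Once $\HH^\bullet(\cat C) \cong \coev_{\cat C}^R(u_{\cat C})$ is established, the remainder is a purely formal rearrangement of the computation of $\coev_{\cat C}^R$ already carried out for Proposition~\ref{prop:Serre}.
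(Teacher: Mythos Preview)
Your argument is correct and follows essentially the same approach as the paper: both identify $\HH^\bullet(\cat C)$ with $\Tr(S^{-1})$ by passing through an adjunction for the (co)evaluation functor, whose description in terms of the Serre functor was established in the proof of Proposition~\ref{prop:Serre}. The only cosmetic difference is directional: the paper starts from $\Tr(S^{-1}) = \Hom_k(k,\, \ev_{\cat C}(\cdots))$ and applies the left adjoint $\ev_{\cat C}^L \cong (\id \otimes S^{-1}) \circ \coev_{\cat C^\dual}$ to move $\ev_{\cat C}$ across, landing on $\Hom(\sigma(u_{\cat C}),\, \sigma(u_{\cat C}))$ after cancelling the equivalence $\id \otimes S^{-1}$ from both arguments; you instead start from $\HH^\bullet(\cat C) = \Hom(\coev_{\cat C}^\sigma(k),\, \coev_{\cat C}^\sigma(k))$ and apply the right adjoint $\coev_{\cat C}^R \cong \ev_{\cat C}^\sigma \circ (S^{-1} \otimes \id)$ directly, which has the minor advantage of quoting that formula verbatim from the earlier proof rather than rederiving its dual.
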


\begin{proof}
    By definition we have
    \[
        \Tr(S^{-1}) \cong
        \Hom_{\catVect}(k,\, \ev_{\cat C} \circ (\id_{\cat C}^\dual \otimes S^{-1}) \circ \coev_{\cat C}^\sigma(k)).
    \]
    As in the proof of Proposition~\ref{prop:Serre}, one shows that the left adjoint of $\ev_{\cat C}$ is given on compact objects by $(\id_{\cat C^\dual} \otimes S^{-1}) \circ \coev_{\cat C^\dual}$.
    Therefore the above becomes
    \[
        \Hom_{\cat C^\dual \otimes \cat C}((\id_{\cat C}^\dual \otimes S^{-1}) \circ \coev_{\cat C^\dual}(k),\, (\id_{\cat C}^\dual \otimes S^{-1}) \circ \coev_{\cat C}^\sigma(k)).
    \]
    Since $S$ and hence also $S^{-1} \otimes \id_{\cat C^\dual}$ are equivalences, this simplifies to
    \begin{align*}
        \Hom_{\cat C^\dual \otimes \cat C}(\coev_{\cat C^\dual}(k),\, \coev_{\cat C}^\sigma(k)) & \cong
        \Hom_{\cat C^\dual \otimes \cat C}(\sigma(u_{\cat C}),\, \sigma(u_{\cat C})) \\&\cong
        \Hom_{\cat C^\dual \otimes \cat C}(u_{\cat C},\, u_{\cat C}) \\&\cong
        \Hom_{\FunctCont}(\id_{\cat C},\, \id_{\cat C}),
    \end{align*}
    which by definition is equal to the Hochschild cohomology of $\cat C$.
\end{proof}

\section{The Hochschild space}

We will be considering various derived categories of sheaves.
When doing so, we will always enhance them to dg categories.
All functors between such categories will be dg enhancements of derived functors, even though we omit the signifiers $\mathbb{L}$ and $\mathbb R$.

Let $G$ be a semisimple complex algebraic group with fixed Borel subgroup $B$, unipotent radical $U$ and Weyl group $W$.
The flag variety $X = G/B$ is stratified by finitely many $U$-orbits $X_w$, called Schubert cells, indexed by $w \in W$.

We write $\catDbDModCh$ for (a dg enhancement of) the bounded derived category of D-modules on $X$ whose cohomology modules have characteristic variety contained in the union of the conormal varieties to Schubert strata.
Clearly such modules are automatically holonomic and hence the Riemann--Hilbert correspondence identifies $\catDbDModCh$ with the (dg enhanced) bounded derived category $\catDbConstr$ of sheaves on $X$ which are constructible with respect to the Schubert stratification.
By Beilinson--Bernstein localization, either of these categories is isomorphic to the derived category of principal block $\catO_0$ of the BGG category $\catO$ associated to $G$.

Further, we write $\catDbStack$ for the bounded derived category of coherent D-modules on the quotient stack $[X/U]$.
Equivalently these are (strongly) $U$-equivariant D-modules on $X$.
Let $\sigma\colon X \to [X/U]$ be the quotient morphism.

For any space $Y$ we write $k_Y$ for the D-module on $Y$ which corresponds to the constant sheaf $\CC_Y$ via the Riemann--Hilbert correspondence.
In the common normalization of the functors this is the structure sheaf of $Y$ placed in cohomological degree $\dim Y$.
For any $Y$ we write $\pi_Y \colon Y \to \pt$ for the structure map.

\begin{Lemma}
    The pullback $\sigma^!\colon \catDbStack \to \catDbDModCoh{X}$ restricts to an equivalence of categories
    \[
        \sigma^!\colon \catDbStack \isoto \catDbDModCh.
    \]
\end{Lemma}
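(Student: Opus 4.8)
The plan is to identify both categories as sitting inside $\catDbDModCoh{X}$ and to recognize that the essential image of the fully faithful functor $\sigma^!$ is precisely the subcategory of objects with the prescribed singular support. First I would recall why $\sigma^!$ is fully faithful: since $U$ is a unipotent group, it is contractible as a complex variety, so the fibers of $\sigma\colon X \to [X/U]$ are (up to the shift built into the $!$-normalization) cohomologically trivial. Concretely, $\sigma$ is a $U$-torsor, hence smooth and surjective, and the functor $\sigma^!$ admits a left adjoint $\sigma_!$ (de Rham pushforward); the unit $\id \to \sigma_! \sigma^!$ (or the counit, depending on normalization) is an isomorphism because $U$-equivariant descent for D-modules is effective and $U$ is connected with trivial cohomology. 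This is the standard statement that $\catDbStack = \catDbDModCoh{[X/U]}$ is the category of (strongly) $U$-equivariant coherent D-modules on $X$, and that for $U$ unipotent the forgetful functor to $\catDbDModCoh{X}$ is fully faithful.

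Next I would characterize the essential image. An object $M \in \catDbDModCoh{X}$ lies in the image of $\sigma^!$ if and only if it is $U$-equivariant (in the strong sense), and I claim this holds exactly when the characteristic variety of each cohomology module is contained in the union of conormals to $U$-orbits, i.e.\ $M \in \catDbDModCh$. One direction is immediate: if $M = \sigma^! N$ then $M$ is $U$-equivariant, and a $U$-equivariant coherent D-module has characteristic variety stable under the $U$-action and contained in the union of conormals to the $U$-orbits $X_w$ (this is the D-module incarnation of the fact that $U$-equivariant constructible sheaves are constructible along the $U$-stratification — Kashiwara's estimate for the characteristic variety forces $SS(M) \subseteq \bigcup_w \overline{T^*_{X_w}X}$). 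For the converse, given $M \in \catDbDModCh$, one shows it is automatically $U$-equivariant: the action map $a\colon U \times X \to X$ and the projection $p\colon U \times X \to X$ satisfy $a^! M \cong p^! M$ because both sides are determined by their restrictions to the $U$-orbits, where $M$ is (a shift of) a local system, and such a local system on an orbit $X_w \cong U/(U\cap wBw^{-1})$ is automatically $U$-equivariant since the stabilizer is connected. Assembling these compatibilities into a genuine equivariant structure (i.e.\ producing the descent datum with its cocycle condition) is cleanest via the $U$-action being "nice": one uses that $U$ has a filtration by normal subgroups with $\Ga$-quotients and inducts, reducing to the case of a free $\Ga$-action where monodromy-invariance of the singular support gives equivariance.

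The main obstacle is the converse direction — showing that the constructibility/characteristic-variety condition actually produces an equivariant structure, not merely a $U$-orbit-by-$U$-orbit triviality. The subtlety is that equivariance is a datum (a coherent isomorphism satisfying a cocycle condition), not merely a property, so I cannot simply say "the characteristic variety is small, therefore equivariant." The resolution is to observe that for a unipotent group the category of equivariant objects is a full subcategory (no higher coherence data survives, by contractibility of $U$ and its classifying context), so equivariance \emph{is} a property after all; then I only need to check it pointwise along orbits, which the singular-support hypothesis guarantees via Kashiwara's theorem on the behaviour of characteristic varieties under smooth pullback. A clean way to package the whole argument is to pass through the Riemann--Hilbert correspondence: on the constructible side, $\catDbConstr$ is by definition sheaves constructible along the $U$-orbit stratification, $U$-equivariant constructible sheaves on $X$ form exactly this category (since $U$ is contractible, $[X/U]$ has the same constructible sheaf theory as $X$ relative to this stratification), and then transport the equivalence back through Beilinson--Bernstein/Riemann--Hilbert to the D-module statement.
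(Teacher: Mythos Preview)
Your proposal reaches the right conclusion, but by a harder road than the paper takes. You and the paper agree on the easy half: full faithfulness of $\sigma^!$ follows from contractibility of $U$, and any $U$-equivariant D-module has characteristic variety in the union of conormals to orbits, so the essential image lands in $\catDbDModCh$. The divergence is in the converse. You attempt to show directly that every object $M$ with the prescribed singular support acquires a $U$-equivariant structure, arguing orbit-by-orbit and then invoking that for unipotent $U$ equivariance is a property rather than a structure. That last reduction is correct, but the step ``$a^!M \cong p^!M$ because both sides are determined by their restrictions to the $U$-orbits'' is where the real work hides: knowing the restriction to each locally closed stratum does not by itself pin down the global object, and you still owe an argument gluing the orbitwise isomorphisms. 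Your Riemann--Hilbert fallback at the end would close this, but it amounts to restarting the proof on the constructible side.

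The paper sidesteps all of this with a generator argument: since each $X_w$ is an affine space, the objects $i_{w\bullet}\sO_{X_w}$ generate $\catDbDModCh$ as a triangulated category; each of these visibly carries a $U$-equivariant structure, hence lies in the essential image of $\sigma^!$; and the essential image of a fully faithful exact functor is a triangulated subcategory, so it must be all of $\catDbDModCh$. This buys you the converse in two lines with no gluing or descent bookkeeping. Your approach has the merit of being more conceptual about \emph{why} the singular-support condition should force equivariance, but the paper's argument is the one to write down.
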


\begin{proof}
    Since $U \cong \as n$ is unipotent, the functor $\pi^!$ is fully faithful.
    Any $U$-equivariant D-module is constant along orbits, and hence contained in $\catDbDModCh$.
    Let $i_w\colon X_w \hookrightarrow X$ be the inclusion.
    Since each $X_w$ is isomorphic to the affine space $\as{l(w)}$, the category $\catDbDModCh$ is generated as a triangulated category by $i_{w\bullet}\sO_{X_w}$ for $w \in W$.
    These modules can be given a trivial $U$-equivariant structure and hence are in the image of $\pi^!$.
    It follows that the essential image of $\pi^!$ is precisely $\catDbDModCh$.
\end{proof}

\begin{Corollary}\label{cor:map_from_cohomology}
    There exists a canonical split monomorphism of dg algebras 
    \[
        H^\bullet(X, \CC)^\op \hookrightarrow \HH^\bullet(\catDbConstr).
    \]
\end{Corollary}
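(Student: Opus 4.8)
The plan is to realise $\catDbConstr$ as a module category over itself for its natural symmetric monoidal structure, and then to invoke the elementary fact that endomorphisms of the monoidal unit act canonically on the identity functor of any module category.

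First I would fix the monoidal structure. The derived tensor product of Schubert-constructible complexes on $X$ is again Schubert-constructible, so $\otimes$ makes $\catDbConstr$ -- and its ind-completion, which carries the same Hochschild cohomology since Hochschild cohomology depends only on the compact objects -- into a symmetric monoidal object of $\DGcatcont$ with unit the constant sheaf $\CC_X$. (Equivalently one may transport this structure through the equivalences $\catDbConstr \cong \catDbDModCh \cong \catDbStack$ of the Lemma and Riemann--Hilbert, where the unit becomes $k_{[X/U]}$ and the monoidal product becomes $\otimes^!$.) Since $X$ is compact, $\CC_X$ is a compact object, and
\[
    \End_{\catDbConstr}(\CC_X) \cong R\Gamma(X,\CC),
\]
so the endomorphism dg algebra of the monoidal unit is the cochain algebra of $X$, with cohomology $H^\bullet(X,\CC)$.

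Next I would produce the map together with a retraction. For any symmetric monoidal $\cat D \in \DGcatcont$ and any $\cat D$-module category $\cat C$, sending $\alpha\colon \mathbf 1 \to \mathbf 1$ to the natural endomorphism of $\id_{\cat C}$ with component $\alpha \otimes \id_c$ at $c$ (read through the unit constraint $\mathbf 1 \otimes c \cong c$) defines a homomorphism of dg algebras $\End_{\cat D}(\mathbf 1) \to \HH^\bullet(\cat C)$; naturality and multiplicativity are immediate from the interchange law. Taking $\cat C = \cat D = \catDbConstr$ yields a degree-preserving algebra map $H^\bullet(X,\CC) \to \HH^\bullet(\catDbConstr)$. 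On the other hand, evaluating a natural endomorphism of $\id_{\cat C}$ at the unit object is again an algebra map $\HH^\bullet(\cat C) \to \End_{\cat C}(\mathbf 1) = H^\bullet(X,\CC)$, and the unit-coherence identity $\lambda_{\mathbf 1} = \rho_{\mathbf 1}$ (with naturality of the unitor) shows that the composite $H^\bullet(X,\CC) \to \HH^\bullet(\catDbConstr) \to H^\bullet(X,\CC)$ is the identity; hence the first map is a split monomorphism. Since $H^\bullet(X,\CC)$ is concentrated in even degrees, hence commutative, it is harmless to record the source as $H^\bullet(X,\CC)^\op$, which is the form appearing in the statement.

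I do not expect a genuine obstacle: once the monoidal structure is in place the construction is purely formal. The one point that requires care is the first step -- checking that $\otimes$ (or the transported $\otimes^!$ on $\catDbStack$) really furnishes a symmetric monoidal structure in the chosen model $\DGcatcont$ with unit $\CC_X$ (resp.\ $k_{[X/U]}$) and the asserted endomorphism algebra -- together with the homotopical bookkeeping needed to see that the two maps above are maps of dg algebras rather than merely of graded algebras.
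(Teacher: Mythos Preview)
Your argument is correct but takes a different route from the paper's. The paper transports to $\catDbStack$ via the preceding Lemma, invokes \cite[Proposition~4.3]{Koppensteiner:2018:HochschildCohomologyOfTorusEquivariantDModules} to obtain a split monomorphism $(p_\bullet k_{[X/U]})^\op \hookrightarrow \HH^\bullet(\catDbStack)$ for the structure map $p\colon [X/U]\to\pt$, and then identifies $p_\bullet k_{[X/U]} \cong H^\bullet(X,\CC)$ using that the pullback $\pi^\bullet$ along $X \to [X/U]$ is fully faithful (unipotence of $U$). You instead stay on the constructible side and exploit the symmetric monoidal structure under $\otimes$ with unit $\CC_X$, producing the map and its retraction by the formal ``endomorphisms of the unit act on every module category'' mechanism. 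Your route is self-contained and avoids both the passage to the quotient stack and the external citation; the paper's route is an instance of a general statement for D-modules on stacks and needs no monoidal bookkeeping specific to this example. One small remark: your justification for discarding the $\op$ via even-degree concentration reaches the right conclusion, but the cleaner reason is that $\End(\mathbf 1)$ in any monoidal $(\infty,1)$-category is automatically $E_2$ and hence commutative up to coherent homotopy, so the $\op$ is canonically trivial independent of parity.
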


\begin{proof}
    The identifications above give an identification of $\HH^\bullet(\catDbConstr)$ with $\HH^\bullet(\catDbStack)$.
    By \cite[Proposition~4.3]{Koppensteiner:2018:HochschildCohomologyOfTorusEquivariantDModules}, there exists a split monomorphism of dg algebras
    \[
        (p_\bullet k_{[X/U]})^\op \to \HH^\bullet(\catDbStack),
    \]
    where $p\colon [X/U] \to \pt$ is the structure map.
    Let $\pi\colon X \to [X/U]$ be the quotient map.
    As $\pi^\bullet$ is fully faithful, the adjunction morphism $\id_{\catDbStack} \to \pi_\bullet\pi^\bullet$ is an isomorphism.
    Therefore
    \[
        p_\bullet k_{[X/U]} \cong
        p_\bullet\pi_\bullet \pi^\bullet k_{[X/U]} \cong
        (\pi \circ p)_\bullet k_X \cong
        H^\bullet(X, \CC).
        \qedhere
    \]
\end{proof}

Let $X_w^2 = G(X_0, X_w) \cong G(B, wBw^{-1})$ be the $G$-orbit in $X\times X$ corresponding to $w \in  W$.
Thus $X_w^2$ consists of pairs of flags in relative position $w$.
Let $\proj_1$ and $\proj_2$ be the two projections $X_w^2 \to X$.

Define an algebraic variety $\HochSpace$ by
\[
    \HochSpace = 
    \bigl\{
        (a,b,u) \in X \times X \times U :
        (a,b) \in X_{w_0}^2,\, (b,ua) \in X_{w_0}^2
    \bigr\}.
\]

\begin{Theorem}\label{thm:HH_as_Hc}
    The Hochschild cohomology of $\catDbConstr$ is $H^\bullet_c(\HochSpace, \CC)[4l(w_0)]$.
\end{Theorem}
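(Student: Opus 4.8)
The plan is to combine Proposition~\ref{prop:HH_as_trace} with the geometric form of the Serre functor of $\catDbConstr$ from \cite{BeilinsonBezrukavnikovMirkovic:2004:TiltingExercises}. By the equivalence $\catDbConstr \cong \catDbStack$ established above, $\catDbConstr$ is the bounded derived category of finitely generated modules over the finite-dimensional algebra $A$; since $\catO_0$ is a highest weight category $A$ has finite global dimension, so the ind-completion of $\catDbConstr$ is compactly generated and $2$-dualizable, and Proposition~\ref{prop:HH_as_trace} gives $\HH^\bullet(\catDbConstr) \cong \Tr(S^{-1})$ with $S$ the Serre functor. Working in the model $\catDbStack = \catDbDModCoh{[X/U]}$, the description in \cite{BeilinsonBezrukavnikovMirkovic:2004:TiltingExercises} identifies $S^{-1}$, up to a cohomological shift, with the twofold composition $F \circ F$ of the convolution endofunctor $F$ of $\catDDMod{[X/U]}$ whose integral kernel is the (shifted) constant D-module on the open $G$-orbit $X_{w_0}^2 \subseteq X \times X$ of pairs of flags in relative position $w_0$; equivalently, $S$ is the square of convolution with the costandard kernel. (I would verify the exact normalization against \emph{loc.\ cit.}, since the shift in the final answer depends on it.)

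Next I would compute $\Tr(S^{-1}) = \ev_{\cat C}(K_{F\circ F})$ using the kernel calculus of Section~\ref{sec:traces}, by which the trace of an endofunctor is the de Rham pushforward to a point of the $!$-restriction of its kernel to the diagonal of $[X/U]$. The kernel of $F \circ F$ records an intermediate flag $b$ subject to $(a,b) \in X_{w_0}^2$ and $(b,c) \in X_{w_0}^2$; presenting $[X/U]$ through the $U$-action on $X$, the diagonal restriction identifies the two outer flags $a$ and $c$ only up to a group element, i.e.\ it imposes $c = ua$ with $u \in U$; and the pushforward turns the trace into an integral over the variety
\[
    \bigl\{ (a,b,u) : (a,b) \in X_{w_0}^2,\ (b,ua) \in X_{w_0}^2 \bigr\} = \HochSpace .
\]
Because the standard kernel is an extension by zero from the open orbit and $\HochSpace$ is open in $X \times X \times U$, this de Rham pushforward computes $H^\bullet_c(\HochSpace,\CC)$ rather than ordinary cohomology --- this is where compact supports enter the statement. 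Adding up the cohomological shifts --- those of the two standard kernels under the normalization $k_Y = \sO_Y[\dim Y]$, the shift produced by the diagonal restriction on $[X/U]$, and the shift built into the Serre functor in \cite{BeilinsonBezrukavnikovMirkovic:2004:TiltingExercises} --- yields the overall shift $[4l(w_0)]$. As a consistency check, $\HochSpace$ has dimension $3l(w_0)$, so $H^\bullet_c(\HochSpace,\CC)[4l(w_0)]$ is concentrated in cohomological degrees $\leq 2l(w_0)$, matching the global dimension $2l(w_0)$ of $A$; and for $G = \SL2$ one computes $\dim H^i_c(\HochSpace,\CC) = 1$ for $i = 4,5,6$ and $0$ otherwise, so $\HH^\bullet(A)$ has dimensions $2,1,1$ in degrees $0,1,2$, in agreement with $\dim Z(A) = 2$.

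The hard part will be making the middle step precise: extracting the exact integral kernel of $S^{-1}$ from \cite{BeilinsonBezrukavnikovMirkovic:2004:TiltingExercises} (on which both the defining conditions of $\HochSpace$ and the final shift rest), and running the trace computation equivariantly --- in particular pinning down the geometry of the ``twisted diagonal'' of $[X/U]$ that introduces the element $u \in U$, the $!$-versus-$*$ bookkeeping that converts the de Rham pushforward $R\Gamma$ into $R\Gamma_c$, and keeping all the shifts straight so that the final twist is exactly $[4l(w_0)]$.
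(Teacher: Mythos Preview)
Your proposal is correct and follows essentially the same route as the paper: reduce to $\Tr(S^{-1})$ via Proposition~\ref{prop:HH_as_trace}, identify $S^{-1}$ with $(R_{w_0}^!)^2$ via \cite{BeilinsonBezrukavnikovMirkovic:2004:TiltingExercises}, and compute the trace by base-changing the resulting correspondence to the diagonal of $[X/U]$, which one presents as $X \times U$ via the action map to obtain exactly $\HochSpace$. The only differences are that the paper invokes \cite{BenZviNadler:arXiv:CharacterTheoryOfAComplexGroup} for 2-dualizability of $\catDbStackBig$ (rather than your algebraic argument through $A$) and \cite{BenZviNadler:arXiv:NonlinearTraces} for the trace-as-base-change formula; also, your $\SL2$ sanity check contains a slip --- $\dim H^4_c(\HochSpace,\CC) = 2$, not $1$, which is precisely what gives $\dim \HH^0 = 2$.
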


\begin{proof}\leavevmode
    For $w \in W$ and $? \in \{*,!\}$ let $R_w^? = \proj_{2,?}\proj_1^*[l(w)]$ be the intertwining endofunctor of $\catDbConstr$, where $\proj_i\colon X_w^2 \to X$ are the projections and $l(w)$ is the length of the Weyl group element $w$.
    By \cite[Proposition~2.5]{BeilinsonBezrukavnikovMirkovic:2004:TiltingExercises}, the Serre functor of $\catDbConstr$ is isomorphic to $(R_{w_0}^*)^2$. 
    Since $w_0 = w_0^{-1}$, \cite[Fact~2.2b]{BeilinsonBezrukavnikovMirkovic:2004:TiltingExercises} implies that the inverse Serre functor $S^{-1}$ of $\catDbConstr$ is given by $(R_{w_0}^!)^2$.
    
    Set $Z = X_{w_0}^2 \times_X X_{w_0}^2$ and let $s,\,t\colon Z \rightrightarrows X$ be the projections onto the first and last factor:
    \[
        \begin{tikzcd}
            & & Z \arrow[dl] \arrow[ddll, bend right=30, "s"'] \arrow[dr] \arrow[ddrr, bend left=30, "t"] & & \\
            & X_{w_0}^2 \arrow[dl, "\proj_1"] \arrow[dr, "\proj_2"'] & & X_{w_0}^2 \arrow[dl, "\proj_1"] \arrow[dr, "\proj_2"'] & \\
            X & & X & & X
        \end{tikzcd}
    \]
    Thus by base change the functor $S^{-1} = (R_{w_0}^!)^2$ is given by $t_!s^*[2l(w_0)]$.
    Equivalently, by the Riemann--Hilbert correspondence, the inverse Serre functor of $\catDbDModCh$ is $t_!s^\bullet[2l(w_0)]$

    Adding the equivalence $\sigma^!\colon \catDbStack \to \catDbDModCh$ we see that that in terms of the category $\catDbStack$ the inverse Serre functor is given by
    \[
        S^{-1} =
        \sigma_!t_!s^\bullet\sigma^![2l(w_0)] =
        \sigma_!t_!s^\bullet\sigma^\bullet[4l(w_0)] =
        (\sigma\circ t)_! (\sigma\circ s)^\bullet[4l(w_0)],
    \]
    where one uses that $\sigma\colon X \to [X/U]$ is smooth of relative dimension $\dim U = l(w_0)$.

    The Hochschild cohomology of $\catDbStack$ is the same as that of the corresponding compactly generated cocomplete category $\catDbStackBig$.
    By \cite[Proposition~5.4 and Theorem~3.18]{BenZviNadler:arXiv:CharacterTheoryOfAComplexGroup}, the category $\catDbStackBig$ is 2-dualizable.
    Hence by Proposition~\ref{prop:HH_as_trace}, we can compute its Hochschild cohomology as the trace of the inverse Serre functor.

    By base change (see \cite[Proposition~3.1(2) and Theorem~4.2(2)]{BenZviNadler:arXiv:NonlinearTraces}), the trace of $S^{-1}$ is given by $(\pi_{\res Z\Delta})_!(k_{\res Z\Delta})[4l(w_0)]$, where 
    \[
        \res Z\Delta = Z \times_{[X/U]\times[X/U]} [X/U].
    \]
    It remains to identify the stack $\res Z\Delta$ with $\HochSpace$.
    We consider the commutative diagram with cartesian squares
    \[
        \begin{tikzcd}
            \res Z\Delta \arrow[r] \arrow[d] & X \times U \arrow[r] \arrow[d, "{(\proj_1,a)}"] & {[X/U]} \arrow[d, "\Delta"] \\
            X_{w_0}^2 \times_X X_{w_0}^2 \arrow[r, "{(s,t)}"] & X \times X \arrow[r] & {[X/U] \times [X/U]}
        \end{tikzcd}
    \]
    where $a\colon X \times U \to X$ is the action map.
    Therefore,
    \begin{align*}
        \res Z\Delta &\cong
        \bigl(X_{w_0}^2 \times_X X_{w_0}^2\bigr) \times_{X \times X} \bigl(X \times U\bigr)
        \\&\cong
        \bigl\{
            ((a,b),(c,d), (e,u)) \in X_{w_0}^2 \times X_{w_0}^2 \times X \times U :
            b = c,\, a = e \text{ and } ue= d
        \bigr\}
        \\&\cong
        \bigl\{
            (a,b,u) \in X \times X \times U :
            (a,b) \in X_{w_0}^2,\, (b,ua) \in X_{w_0}^2
        \bigr\} = \HochSpace.
    \end{align*}
    The result thus follows from transporting $(\pi_{\res Z\Delta})_!(k_{\res Z\Delta})[4l(w_0)] \cong \pi_{\HochSpace,!}(k_{\res Z\Delta})[4l(w_0)]$ back to the topological setting via the Riemann--Hilbert correspondence.
\end{proof}

\section{Computations in type A}

According to Theorem~\ref{thm:HH_as_Hc}, in order to compute the Hochschild cohomology of the principal block $\catO_0$, we can equivalently compute the compactly supported cohomology of $\HochSpace$.
Our approach will be to stratify $\HochSpace$ by intersections of Schubert cells.
In type A enough is known about the topology of the occurring intersections to deduce some information on the Hochschild cohomology.

From now on, we set $G = \SL{n}$.
Thus $X$ is the space of full flags in $\CC^n$ and $W \cong \symgroup n$.

\begin{Theorem}\label{thm:HH_Euler}
    For $G = \SL n$, the Euler characteristic of the Hochschild cohomology $\HH^\bullet(\catDbConstr)$ is $n!$.
\end{Theorem}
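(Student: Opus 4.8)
By Theorem~\ref{thm:HH_as_Hc}, we have $\HH^\bullet(\catDbConstr) \cong H^\bullet_c(\HochSpace, \CC)[4l(w_0)]$, and since Euler characteristics are insensitive to shifts and to the difference between ordinary and compactly supported cohomology (for a variety over $\CC$), it suffices to compute $\chi(\HochSpace) = \chi_c(\HochSpace)$. The plan is to stratify $\HochSpace$ into locally closed pieces whose Euler characteristics are easy to evaluate, and then sum. Recall
\[
    \HochSpace = \bigl\{ (a,b,u) \in X \times X \times U : (a,b) \in X_{w_0}^2,\ (b,ua) \in X_{w_0}^2 \bigr\}.
\]

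The first step is to decompose $\HochSpace$ according to the relative position of the pair of flags $(a, ua)$. For $w \in W$, let $\HochSpace_w = \{(a,b,u) \in \HochSpace : (a,ua) \in X_w^2\}$; these are locally closed and partition $\HochSpace$, so $\chi(\HochSpace) = \sum_{w \in W} \chi(\HochSpace_w)$. Next I would analyze the fibration $\HochSpace_w \to X_w^2$, $(a,b,u) \mapsto (a,ua)$. Over a point $(a, a')$ with $(a,a') \in X_w^2$, the fiber consists of pairs $(b,u)$ with $ua = a'$ and $(a,b), (b,a') \in X_{w_0}^2$; since $G$ acts transitively on $X_w^2$ and $(a,b) \in X_{w_0}^2$ forces $b$ to lie in the (open dense) big cell relative to $a$, the condition on $b$ is that $b$ be opposite to both $a$ and $a'$, an open subset of $X$, while the set of $u \in U$ with $ua = a'$ is a torsor under $\mathrm{Stab}_U(a)$. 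The upshot is that $\chi(\HochSpace_w)$ factors as $\chi(X_w^2) \cdot \chi(\text{opposite-to-both locus}) \cdot \chi(U\text{-fiber})$, and one must compute each factor. The locus of flags opposite to two fixed flags in position $w$ is the intersection of two big Schubert cells, and its Euler characteristic is exactly where type A topology enters — this is the key input, and I expect it to be the main obstacle: in type A such double Schubert-cell intersections are known (e.g.\ via results of Deodhar, or the cluster/total-positivity literature) to have Euler characteristic governed by simple combinatorics of $w$, typically $(-1)^{\ell(w)}$ or a closely related sign, whereas in other types the relevant intersections can have more complicated cohomology.

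Assembling the pieces, one obtains $\chi(\HochSpace) = \sum_{w \in W} (\pm 1)^{?}$, and the arithmetic should collapse to $|W| = n!$. Concretely, I would expect the contributions to arrange so that each $w \in W$ contributes $+1$ after the signs from $\chi(X_w^2)$, the $U$-torsor dimension, and the intersection locus cancel; the shift $[4l(w_0)]$ contributes a global sign $(-1)^{4l(w_0)} = +1$ and is harmless. The final step is to verify this cancellation is exact, i.e.\ that the product of Euler-characteristic signs attached to each stratum $\HochSpace_w$ is identically $1$; this reduces to a bookkeeping identity among $l(w)$, $l(w_0)$, $\dim U$, and the dimension of the relevant cell intersection. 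The delicate point throughout is that we need the \emph{actual} Euler characteristics of the Deodhar-type strata, not just their dimensions, and it is precisely the type A case in which these are pinned down sufficiently to carry out the sum; extracting and citing that topological fact, and checking it matches the needed sign, is where the real work lies.
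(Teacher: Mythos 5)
Your overall strategy matches the paper's: reduce to $\chi_c(\HochSpace)$, stratify by the relative position of $a$ and $ua$, and reduce each stratum to the Euler characteristic of the locus $C_{w_0}(a,ua)$ of flags opposite to both $a$ and $ua$. However, the crucial topological input is exactly the point you leave as a guess, and the guess is wrong. The correct statement (the paper's Lemma~\ref{lem:chi_of_intersection}, extracted from Shapiro--Vainshtein's computations in type A) is that $\chi_c\bigl(C_{w_0}(x_1,x_2)\bigr) = 0$ whenever $x_1 \neq x_2$, and $=1$ when $x_1 = x_2$. It is this \emph{vanishing off the diagonal} -- not a sign $(-1)^{\ell(w)}$ together with a cancellation of signs from dimensions of strata -- that makes the sum collapse: every stratum with $a \neq ua$ contributes $0$, and the diagonal locus $\{a = ua\}$ contributes $\chi_c(U_w)\cdot\chi_c(X_w) = 1$ for each $w$, giving $|W| = n!$. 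With your proposed value $(\pm 1)^{?}$ per stratum there is no reason for the sum to equal $n!$, and no amount of bookkeeping with $l(w)$ and $\dim U$ will produce the answer; the argument genuinely needs the vanishing, and establishing it (the paper does so via \cite[Lemma~2.3, Theorem~5.3, Corollary~5.4]{ShapiroVainshtein:1990:EulerCharacteristicsForLinksOfSchubertCells}, including patching a missed special case $\sigma = w_0$) is the real content of the proof.

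There is also a structural slip in your stratification. Since $u \in U$, the flags $a$ and $ua$ always lie in the \emph{same} $U$-orbit (Schubert cell), so the image of the map $(a,b,u) \mapsto (a,ua)$ is a proper subvariety of the $G$-orbit $X_w^2$; your factorization $\chi(\HochSpace_w) = \chi(X_w^2)\cdot\chi(\text{fiber})\cdot\chi(U\text{-torsor})$ therefore uses the wrong base. The paper avoids this by first decomposing according to the Schubert cell $X_v$ containing $a$, and only then fibering over $X_v \times X_v$ stratified by relative position. Because the fiber Euler characteristic vanishes off the diagonal this error would ultimately be harmless, but as written your computation of the base is incorrect and, combined with the unproven key lemma, the argument does not close.
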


For two flags $x_1, x_2 \in X$ and a Weyl group element $w \in W$ we denote by $C_w(x_1,x_2)$ the space of all flags $y \in X$ such that $y$ is in relative position $w$ to both $x_1$ and $x_2$.

\begin{Lemma}\label{lem:chi_of_intersection}
    The Euler characteristic of the compactly supported cohomology of $C_{w_0}(x_1,x_2)$ is $0$, except for the case $x_1 = x_2$, in which case it is $1$.
\end{Lemma}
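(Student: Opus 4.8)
The plan is to treat the diagonal case directly and to handle the off-diagonal case $x_1 \neq x_2$ by exhibiting a torus acting on $C_{w_0}(x_1,x_2)$ without fixed points and then invoking torus localization for the compactly supported Euler characteristic.

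First, when $x_1 = x_2 = x$ the variety $C_{w_0}(x,x)$ is just the set of flags in relative position $w_0$ to $x$, i.e.\ the open Schubert cell of flags in general position with respect to the Borel $\mathrm{Stab}_G(x)$. This cell is isomorphic to $\as{l(w_0)}$, so $H^\bullet_c(C_{w_0}(x,x),\CC)$ is one-dimensional, concentrated in degree $2l(w_0)$, and has Euler characteristic $1$. This is the only case producing a nonzero answer.

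For the off-diagonal case, let $B_i = \mathrm{Stab}_G(x_i)$ be the two Borel subgroups stabilizing $x_1$ and $x_2$. Since any two Borel subgroups of $G$ contain a common maximal torus, I would fix a maximal torus $T' \subseteq B_1 \cap B_2$. Because the conditions defining $C_{w_0}(x_1,x_2)$ are $G$-equivariant and $T'$ fixes both $x_1$ and $x_2$, the open subvariety $C_{w_0}(x_1,x_2) \subseteq X$ is $T'$-stable. I then claim it has no $T'$-fixed points: after conjugating so that $T'$ is the standard maximal torus, the $T'$-fixed flags are precisely the finitely many $wB$ with $w \in W$; writing $x_i = w_iB$, a $T'$-fixed flag $vB$ lies in $C_{w_0}(x_1,x_2)$ exactly when $w_i^{-1}v \in B w_0 B$ for $i=1,2$, i.e.\ $v = w_1w_0 = w_2w_0$, which forces $w_1 = w_2$, contradicting $x_1 \neq x_2$.

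Finally, I would conclude by torus localization for Euler characteristics with compact support: for a torus acting on a complex algebraic variety $Y$ one has $\chi\bigl(H^\bullet_c(Y,\CC)\bigr) = \chi\bigl(H^\bullet_c(Y^{T'},\CC)\bigr)$, so the above computation yields $\chi\bigl(H^\bullet_c(C_{w_0}(x_1,x_2),\CC)\bigr) = \chi\bigl(H^\bullet_c(\emptyset)\bigr) = 0$. If a self-contained argument is preferred, one picks a generic one-parameter subgroup $\mathbb{G}_m \hookrightarrow T'$ (whose fixed locus in $C_{w_0}(x_1,x_2)$ is still empty), stratifies $C_{w_0}(x_1,x_2)$ by the finite $\mathbb{G}_m$-stabilizers, and observes that each stratum is a Zariski-locally trivial $\mathbb{G}_m$-bundle over its quotient and hence has vanishing compactly supported Euler characteristic; additivity of $\chi_c$ then gives the result. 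The only subtlety is the bookkeeping of relative positions among $T'$-fixed flags and applying the localization principle in its compactly supported form; I do not expect either to present a genuine obstacle.
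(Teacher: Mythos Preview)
Your argument is correct and takes a genuinely different route from the paper's. The paper does not argue directly: it identifies $C_{w_0}(x_1,x_2)$ with the spaces $V_{\bar\sigma}$ and $A_\sigma$ studied by Shapiro--Vainshtein and then quotes their recursive computation of the Euler characteristic via a family of polynomials $P_\sigma(t)$, patching a missing boundary case of their Corollary~5.4 by hand. Your approach instead exploits the obvious symmetry: a maximal torus $T' \subseteq B_1 \cap B_2$ acts on $C_{w_0}(x_1,x_2)$, and the easy bookkeeping of relative positions among $T'$-fixed flags shows there are no fixed points when $x_1 \neq x_2$, so torus localization for $\chi_c$ (or the elementary $\mathbb{G}_m$-stratification you sketch) gives the vanishing immediately. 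The diagonal case is handled identically in spirit in both proofs.

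What your approach buys is not just self-containment but generality: nothing in your argument uses that $G = \SL n$, whereas the Shapiro--Vainshtein reference is specific to type~A. Since the proof of Theorem~\ref{thm:HH_Euler} uses only this lemma together with type-independent stratification arguments, your proof of the lemma in fact upgrades Theorem~\ref{thm:HH_Euler} to arbitrary semisimple $G$, giving $\chi\bigl(\HH^\bullet(\catDbConstr)\bigr) = |W|$ in all types. This is precisely the Borel case of the conjecture stated in the introduction, so it is worth pointing out explicitly.
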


\begin{proof}
    This follows from the computations in \cite{ShapiroVainshtein:1990:EulerCharacteristicsForLinksOfSchubertCells}: with the notation used there we have $C_{w_0}(x_1,x_2) = V_{\overline \sigma}$, where $\overline \sigma \in W$ is the relative position of $x_1$ and $x_2$.
    Then \cite[Lemma~2.3]{ShapiroVainshtein:1990:EulerCharacteristicsForLinksOfSchubertCells} identifies $V_{\overline \sigma}$ and $A_{\sigma}$ and \cite[Corollary~5.4]{ShapiroVainshtein:1990:EulerCharacteristicsForLinksOfSchubertCells} gives the Euler characteristic of the latter space.
    Applying Poincar\'e duality identifies this with the Euler characteristic of compactly supported cohomology.

    One notes that the statement of \cite[Corollary~5.4]{ShapiroVainshtein:1990:EulerCharacteristicsForLinksOfSchubertCells} misses the special case $\sigma = w_0$.
    Indeed, one has to evaluate $P_\sigma(1)$.
By \cite[Theorem~5.3]{ShapiroVainshtein:1990:EulerCharacteristicsForLinksOfSchubertCells}, $P_\sigma(t) = \sum_{w \in W_{\sigma}} t^{n-n(w)} (1-t)^{n(w)-1} P_{\pi(\sigma,w)(t)}$, where $W_\sigma$, $n(w)$ and $\pi(\sigma,w)$ are defined in Sections~3.1 and~3.2 of \cite{ShapiroVainshtein:1990:EulerCharacteristicsForLinksOfSchubertCells}.
    If $\sigma^{-1}(n) \ne 1$, then $n(w)$ is always at least $2$ and hence $P_\sigma(1) = 0$.
    If $\sigma = w_0$, then one easily checks that $P_{w_0}(1) = 1$.
    Otherwise, one recursively will always eventually arrive at a $\pi(\sigma, w)$ of the first type, so that the polynomial has to vanish at $t = 1$.
\end{proof}

\begin{proof}[Proof of Theorem~\ref{thm:HH_Euler}]
    For a topological space $Y$ let $\chi_c(Y)$ be the Euler characteristic of the compactly supported cohomology of $Y$.
    We have to compute $\chi_c(\HochSpace)$.

    Let $\proj_1\colon \HochSpace \to X$ be the projection on the first factor and set $\HochSpace_w = \proj_1^{-1}(X_w)$.
    The Euler characteristic $\chi_c$ is additive, so that it suffices to compute $\chi_c(\HochSpace_w)$ for all $w \in W$.

    We can further decompose the spaces $\HochSpace_w$ by the relative positions of the flags $a$ and $ua$:
    we have a map $\phi$ from $\HochSpace_w$ to $X_w \times X_w$, sending $(a,b,u)$ to $(a, ua)$.
    For any $u \in U$, the fiber of $\phi$ over $(a, ua)$ is isomorphic to $C_{w_0}(a,ua) \times U_w$, where $U_w$ is the stabilizer of any point in $X_w$.
    
    Endowing $X_w \times X_w$ with the stratification by relative position, over each stratum $\phi$ is a fiber bundle.
    Away from the diagonal (i.e.~when $a \ne ua$), $\chi_c(C_{w_0}(a,ua) \times U_w) = 0$ by Lemma~\ref{lem:chi_of_intersection} above.
    Hence $\chi_c(\HochSpace_w \setminus \phi^{-1}(\Delta)) = 0$, where $\Delta$ is the diagonal in $X_w \times X_w$.
    On the other hand, for any $a \in X_w$ we have 
    \[
        \chi_c(\phi^{-1}(\Delta)) = \chi_c(C_{w_0}(a,a)) \times \chi_c(U_w) \times \chi_c(X_w) = 1.
    \]
    In total this give $\chi_c(\HochSpace) = \sum_{w \in W} \chi_c(\HochSpace_w) = \sum_{w \in W} 1 = |W| = n!$.
\end{proof}

\begin{Example}
    Let us conclude with a computation for $\SL 2$.
    Here $W = \symgroup 2 = \{1, w_0\}$, $X = \ps1$, $U = \Ga$, $X_1 = \{\infty\}$, and $X_{w_0} = \as1$.
    Further, $X_{w_0}^2 = X \times X \setminus \Delta$.

    Let us first describe the space $\HochSpace_1$, i.e.~$a=\infty$.
    As $X_1$ is a single point, we have $a = ua = \infty$ for any $u \in U$.
    Further $(0,b) \in X_{w_0}^2$ implies that $b \ne \infty$, i.e.~$b \in \ps1\setminus \{\infty\} \cong \as1$.
    In total we get
    \[
        \HochSpace_1 \cong \{\infty\} \times \as1 \times \Ga \cong \as2.
    \]
    The space $\HochSpace_{w_0}$ is more complicated.
    We have $a \in \as1 = \ps1 \setminus \{\infty\}$, $u \in \Ga$ arbitrary and $b \in \ps1$ with $b\ne a$ and $b \ne ua$.
    Thus for $u \ne 0$ we have $b$ in $\ps1$ minus $2$ points, while for $u=0$ we have $b \in \ps1 \setminus\{a\}$.

    Let us fix $a$ and look at the remaining options for $(b,u) \in (X\setminus\{a\}) \times \Ga \cong \as1 \times \as1$.
    Let $A$ be the space of all valid points $(b,u)$ and consider the projection $A \to \Ga$ to the $u$-coordinate.
    Its fiber over $0$ is $\as 1$, while the fiber over any other point is $\as1$ minus a point.
    Thus $A$ is homeomorphic to $\as1 \times \as1$ minus $\CC^\ast$.
    The compactly supported cohomology of $A$ is then easy to compute from the long exact sequence associated to this description.
    It is $\CC[-2] \oplus \CC[-3] \oplus \CC[-4]$.
    The compactly supported cohomology of $\HochSpace_{w_0}$ is then the same, but shifted up by a further $2$ (from the variation of $a \in X_{w_0}$).

    Putting the two computations together, we obtain a triangle
    \[
        \CC[-4] \oplus \CC[-5] \oplus \CC[-6] \to \pi_{\HochSpace,!}(k_{\HochSpace}) \to \CC[-4].
    \]
    Shifting down by $4l(w_0) = 4$, we arrive at
    \[
        \CC \oplus \CC[-1] \oplus \CC[-2] \to \HH^\bullet(\catDbConstr) \to \CC.
    \]
    As the zeroth Hochschild cohomology has to be isomorphic to $\bigoplus_n H^n(X,\CC) \cong k^2$ (see Remark~\ref{rem:cohomology_in_HH}), the connecting map in the long exact sequence of the triangle has to be trivial.
    Thus we arrive at an isomorphism of vector spaces
    \[
        \HH^n(\catDbConstr) = 
        \begin{cases}
            \CC & n = 2 \\
            \CC & n = 1 \\
            \CC^2 & n = 0 \\
            0 & \text{otherwise.}
        \end{cases}
    \]
\end{Example}

\sloppy
\printbibliography

@article{BeilinsonBezrukavnikovMirkovic:2004:TiltingExercises,
    AUTHOR = {Beilinson, A. and Bezrukavnikov, R. and Mirković, I.},
     TITLE = {Tilting exercises},
shortJOURNAL = {Mosc. Math. J.},
   JOURNAL = {Moscow Mathematical Journal},
    VOLUME = {4},
      YEAR = {2004},
    NUMBER = {3},
     PAGES = {547--557},
      ISSN = {1609-3321},
   MRCLASS = {14F05 (18E30)},
  MRNUMBER = {2119139 (2006a:14022)},
MRREVIEWER = {Toshiyuki Tanisaki},
DOI = {10.17323/1609-4514-2004-4-3-547-557},
}

@ARTICLE{BenZviNadler:arXiv:NonlinearTraces,
   author = {{Ben-Zvi}, David and {Nadler}, David},
    title = "{Nonlinear Traces}",
  journal = {ArXiv e-prints},
archivePrefix = "arXiv",
   eprint = {1305.7175},
 primaryClass = "math.AG",
 keywords = {Mathematics - Algebraic Geometry, Mathematics - Category Theory, Mathematics - Quantum Algebra, Mathematics - Representation Theory},
     year = 2013,
    month = may,
}

@book{GaitsgoryRozenblyum:2017:StudyInDAG:1,
	maintitle = {A study in derived algebraic geometry},
	title = {Correspondences and Duality},
	volume = {1},
	author = {Gaitsgory, Dennis and Rozenblyum, Nick},
	year= {2017},
	series = {Mathematical Surveys and Monographs},
	number = {221},
	publisher = {American Mathematical Society},
	pagetotal = {533},
	isbn = {978-1-4704-3569-1},
}

@incollection{ShapiroVainshtein:1990:EulerCharacteristicsForLinksOfSchubertCells,
    AUTHOR = {Shapiro, B. Z. and Vaĭnshteĭn, A. D.},
     TITLE = {Euler characteristics for links of {S}chubert cells in the
              space of complete flags},
 BOOKTITLE = {Theory of singularities and its applications},
    SERIES = {Advances in Soviet Mathematics},
    number = {1},
     PAGES = {273--286},
 PUBLISHER = {Amer. Math. Soc., Providence, RI},
      YEAR = {1990},
	  isbn = {978-0-8218-4100-6},
   MRCLASS = {14M15 (14M17 57R20)},
  MRNUMBER = {1089682 (92a:14059)},
MRREVIEWER = {Piotr Pragacz},
DOI = {10.1007/bf01095253},
}

@ARTICLE{BenZviNadler:arXiv:CharacterTheoryOfAComplexGroup,
   author = {{Ben-Zvi}, David and {Nadler}, David},
    title = "{The Character Theory of a Complex Group}",
  journal = {ArXiv e-prints},
archivePrefix = "arXiv",
   eprint = {0904.1247},
 primaryClass = "math.RT",
 keywords = {Mathematics - Representation Theory, Mathematics - Algebraic Geometry, Mathematics - Quantum Algebra},
     year = 2009,
    month = apr,
   adsurl = {http://adsabs.harvard.edu/abs/2009arXiv0904.1247B},
  adsnote = {Provided by the SAO/NASA Astrophysics Data System}
}

@ARTICLE{Koppensteiner:2018:HochschildCohomologyOfTorusEquivariantDModules,
	IDS={Koppensteiner:arXiv:HochschildCohomologyOfTorusEquivariantDModules},
    author = {Koppensteiner, Clemens},
    title = "{Hochschild Cohomology of Torus Equivariant {D}-modules}",
    journal = {International Mathematics Research Notices},
    volume = {2020},
    number = {19},
    pages = {6391-6420},
    year = {2018},
    month = {08},
%	archivePrefix = "arXiv",
%    eprint = {1506.07370},
%    primaryClass = "math.AG",
	doi={10.1093/imrn/rny206},
	issn = {1073-7928},
}

@article {Stroppel:2009:ParabolicCategoryO,
    AUTHOR = {Stroppel, Catharina},
     TITLE = {Parabolic category {$\mathcal O$}, perverse sheaves on
              {G}rassmannians, {S}pringer fibres and {K}hovanov homology},
shortJOURNAL = {Compos. Math.},
   JOURNAL = {Compositio Mathematica},
    VOLUME = {145},
      YEAR = {2009},
    NUMBER = {4},
     PAGES = {954--992},
      ISSN = {0010-437X},
   MRCLASS = {17B10 (14F05 57M27)},
  MRNUMBER = {2521250},
MRREVIEWER = {Ada Boralevi},
       DOI = {10.1112/S0010437X09004035},
}

@article {BondalOrlov:2001:ReconstructionOfAVariety,
    AUTHOR = {Bondal, Alexei and Orlov, Dmitri},
     TITLE = {Reconstruction of a variety from the derived category and
              groups of autoequivalences},
shortJOURNAL = {Compositio Math.},
   JOURNAL = {Compositio Mathematica},
    VOLUME = {125},
      YEAR = {2001},
    NUMBER = {3},
     PAGES = {327--344},
      ISSN = {0010-437X},
   MRCLASS = {18E30 (14F05)},
  MRNUMBER = {1818984},
MRREVIEWER = {Richard P. Thomas},
       DOI = {10.1023/A:1002470302976},
}

@article {BondalKapranov:1989:RepresentableFunctorsSerreFunctors,
    AUTHOR = {Bondal, A. I. and Kapranov, M. M.},
     TITLE = {Representable functors, {S}erre functors, and reconstructions},
   JOURNAL = {Izv. Akad. Nauk SSSR Ser. Mat.},
  FJOURNAL = {Izvestiya Akademii Nauk SSSR. Seriya Matematicheskaya},
    VOLUME = {53},
      YEAR = {1989},
    NUMBER = {6},
     PAGES = {1183--1205, 1337},
      ISSN = {0373-2436},
   MRCLASS = {14F05 (18E30 32C38 32L99 32S60)},
  MRNUMBER = {1039961},
MRREVIEWER = {Alexey N. Rudakov},
DOI = {10.1070/IM1990v035n03ABEH000716},
}

@article {MazorchukStroppel:2008:ProjectiveInjectiveModulesSerreFunctorsEtc,
    AUTHOR = {Mazorchuk, Volodymyr and Stroppel, Catharina},
     TITLE = {Projective-injective modules, {S}erre functors and symmetric
              algebras},
shortJOURNAL = {J. Reine Angew. Math.},
   JOURNAL = {Journal für die reine und angewandte Mathematik},
    VOLUME = {616},
      YEAR = {2008},
     PAGES = {131--165},
      ISSN = {0075-4102},
     CODEN = {JRMAA8},
   MRCLASS = {16G10 (17B10 18E30)},
  MRNUMBER = {2369489 (2009e:16027)},
MRREVIEWER = {Octavio Mendoza Hern{\'a}ndez},
       DOI = {10.1515/CRELLE.2008.020},
}

\end{document}